\numberwithin{equation}{section}
\renewcommand\b{\beta}
\def\eps{\varepsilon }
\renewcommand\b{\beta}
\def\eps{\varepsilon}
\newcommand\br{\begin{remark}}
\newcommand\er{\end{remark}}
\newcommand\bp{\begin{pmatrix}}
\newcommand\ep{\end{pmatrix}}
\newcommand{\be}{\begin{equation}}
\newcommand{\ee}{\end{equation}}
\newcommand\ba{\begin{equation}\begin{aligned}}
\newcommand\ea{\end{aligned}\end{equation}}
\newcommand{\bap}{\begin{app}}
\newcommand{\eap}{\end{app}}
\newcommand{\begs}{\begin{exams}}
\newcommand{\eegs}{\end{exams}}
\newcommand{\beg}{\begin{example}}
\newcommand{\eeg}{\end{example}}
\newcommand{\bpr}{\begin{proposition}}
\newcommand{\epr}{\end{proposition}}
\newcommand{\bt}{\begin{theorem}}
\newcommand{\et}{\end{theorem}}
\newcommand{\bc}{\begin{corollary}}
\newcommand{\ec}{\end{corollary}}
\newcommand{\bl}{\begin{lemma}}
\newcommand{\el}{\end{lemma}}
\newcommand{\bd}{\begin{definition}}
\newcommand{\ed}{\end{definition}}
\newcommand{\brs}{\begin{remarks}}
\newcommand{\ers}{\end{remarks}}
\newcommand{\U }{\mathcal{U}}
\newcommand{\A }{\mathcal{A}}
\newcommand{\Id}{{\rm Id }}
\newtheorem{theorem}{Theorem}[section]
\newtheorem{proposition}[theorem]{Proposition}
\newtheorem{corollary}[theorem]{Corollary}
\newtheorem{lemma}[theorem]{Lemma}
\theoremstyle{remark}
\newtheorem{remark}[theorem]{Remark}
\theoremstyle{definition}
\newtheorem{definition}[theorem]{Definition}
\newtheorem{example}[theorem]{Example}
\newcommand{\beq}{\begin{equation}}
\newcommand{\eeq}{\end{equation}}
\title{Orbital stability of undercompressive viscous shock waves under $L^1\cap H^4$ perturbation}
\author{Zhao Yang}
\address{Academy of Mathematics and Systems Science, Chinese Academy of Sciences, Beijing
100190 China.}
\email{yangzhao@amss.ac.cn}
\thanks{Research of Z.Y. was partially supported by the IU College of Arts and Sciences Dissertation Year Fellowship
and by TODO (mention Urbana and CAS support, excellent 100, etc.)}
\author{Kevin Zumbrun}
\address{Indiana University, Bloomington, IN 47405}
\email{kzumbrun@iu.edu}
\thanks{Research of K.Z. was partially supported
under NSF grants no. DMS-1700279, DMS-2154387, and DMS-2206105}
\begin{document}

\begin{abstract}
By the use of a new vertical estimate introduced by the authors in the context of relaxation shocks
for shallow water flow, we both simplify and extend the basic $L^1\cap H^3$ stability results 
of Mascia and Zumbrun for viscous shock waves, in particular extending their results for Lax waves to the 
undercompressive case.
\end{abstract}

\date{\today}
\maketitle

{\it Keywords}: viscous shock stability, undercompressive shock.

\tableofcontents

\section{Introduction}\label{s:intro}
In this note, we establish by a single argument orbital stability under $L^1\cap H^4$ perturbation
of arbitrary amplitude viscous Lax or undercompressive shock waves of partially symmetric hyperbolic--parabolic 
systems of conservation laws as defined in \cite{Z1}, under the assumption of spectral, Evans function stability.
These include in particular spectrally stable viscous shock solutions of systems of classical 
Kawashima type \cite{K}.  This (partially) extends previous phase-asymptotic orbital stability analyses 
\cite{ZH,HZ,RZ,HRZ} based on pointwise estimates requiring much stronger localization ($\sim (1+|x|)^{-3/2}$) 
of the initial data.  At the same time, it extends the simpler $L^1\cap H^3$ orbital stability analyses 
of \cite{MZ1,MZ2} for Lax waves to the undercompressive case.

Consider the degenerate parabolic conservation laws
\be 
\label{Geq}
F^0(U)_t+F^1(U)_x=(B(U)U_x)_x,\quad U=\left(\begin{array}{c} U^I\\U^{II}\end{array}\right),\quad B=\left(\begin{array}{rr}0&0\\0&b\end{array}\right),
\ee 
$x,t\in\mathbb{R}$, $U,\;F^0,\;F^1\in \mathbb{R}^n$, $U^{II}\in \mathbb{R}^{r}$, $B\in\mathbb{R}^{n\times n}$, $b\in \mathbb{R}^{r\times r}$, with viscous shock profiles, or (without loss of generality) standing wave solutions $\bar{U}(x)$, $\lim_{x\rightarrow \pm\infty}=\bar{U}_\pm$ satisfying

\medskip

{(A1)} $dF^1(\bar{U}_\pm)$, $dF^1_{11}$, $dF^0$ are symmetric, $dF^0>0$,

{(A2)} no eigenvector of $dF^1{(dF^0)}^{-1}(\bar{U}_\pm)$ lies in the kernel of $B{(dF^0)}^{-1}(\bar{U}_\pm)$,

{(A3)} $b>0$.

\medskip

Following \cite{ZH,MZ1,Z1}, we denote as asymptotic hyperbolic characteristics the eigenvalues
$a_j^\pm$ of limiting matrices $A_\pm:= dF^1{(dF^0)}^{-1}(\bar{U}_\pm)$, ordered by
\ba
\label{characteristics}
&a_1^+<a_2^+<\cdots<a_{i_+}^+<0<a_{i_++1}^+<\cdots<a_n^+,\\
&a_1^-<a_2^-<\cdots<a_{i_-}^-<0<a_{i_-+1}^-<\cdots<a_n^-.
\ea 
The shock is called hyperbolic Lax type if $i_+=i_-+1$ (Lax $i_+$-shock), 
undercompressive type if $i_+\leq i_-$, and overcompressive type if $i_+\geq i_-+2$.
If it is of hyperbolic Lax or undercompressive type, then it is called \cite{MZ1,Z1} ``pure''
viscous Lax or undercompressive type if
also $\bar U$ is, up to translation, the unique traveling-wave profile connecting $\bar{U}_\pm$.

From here on, we restrict discussion to the Lax or undercompressive case, assuming

\medskip

(H1) $\bar U$ is unique up to translation,

\medskip
\noindent
and refer to waves simply as Lax or undercompressive type, omitting the descriptor ``pure''.

As discussed in \cite{MZ1,Z1}, given a viscous shock profile, one may define an {\it Evans function}
$D(\cdot)$ associated with the linearized operator $L$ about the profile (given in \eqref{Leq} below), with
the properties that $D$ is analytic in the closed right half-plane, with nonzero roots $\lambda$ agreeing in
location and multiplicity with the nonzero eigenvalues of $L$. (Under our assumptions, 
$L$ has no essential spectrum in the closed right half-plane except at $\lambda=0$ \cite{MZ1,Z1}.)

In the Lax or overcompressive case considered here, this allows a particularly concise
characterization of linearized stability in the form of 
the {\it Evans function condition} \cite{MZ1,Z1}:

\medskip
($\mathcal{D}$) There exists precisely one zero of $D(\cdot)$ in the nonstable half-plane $\{\lambda:\Re \lambda \geq  0\}$, necessarily at the origin.
\medskip

\noindent
This condition, necessary and sufficient for $L^1\to L^p$ linearized stability \cite{MZ1,Z1} is a generalized
spectral stability criterion encoding both stability of usual point spectra, and the behavior of the resolvent
in the vicinity of the $\lambda=0$ embedded in (the boundary of) the essential spectrum of $L$.

\subsection{Main results}\label{s:main}
Our main result extends to pure undercompressive profiles the nonlinear orbital stability result obtained for
pure Lax profiles in \cite{MZ1,MZ2}.

\bt\label{main1}
Let $\bar{U}$ be a viscous shock profile of Lax or undercompressive type, satisfying (A1)--(A3) and (H1). 
And, assume that $\bar{U}$ is spectrally stable in the sense of the Evans function condition ($\mathcal{D}$).
Then, $\bar{U}$ is nonlinearly orbitally stable with respect to initial perturbations $u_0$ of  sufficiently small norm in $L^1\cap H^{s}$, $s\geq 4$. 
That is, for perturbation $|u_0|_{L^1\cap H^s}\leq \eps$ and initial data $\tilde U_0:=\bar{U}_0+u_0$, 
there exists a global solution $\tilde U$ of \eqref{Geq} and constant $C>0$ satisfying
for $t\geq 0$, $2\leq p\leq \infty$: 
\ba\label{mainests}
|\tilde U(\cdot+\delta(t),t)-\bar{U}(\cdot)|_{H^s}&\leq C\eps (1+t)^{-1/4},\\
|\tilde U(\cdot+\delta(t),t)-\bar{U}(\cdot)|_{L^p}&\leq C\eps (1+t)^{-(1/2)(1-1/p)}, \\
|\tilde U_x(\cdot+\delta(t),t)-\bar{U}_x(\cdot)|_{L^p}&\leq C\eps (1+t)^{-(1/2)(1-1/p)}, \\
|\dot \delta(t)|&\leq C\eps (1+t)^{-(1/2)},\\
|\delta(t)|&\leq C\eps,
\ea
where the phase shift $\delta(t)$ defined in \eqref{delta} is the approximate shock location of $\tilde{U}$. Moreover, for each fixed $x$ there holds the vertical estimate
\be\label{vert}
\int_0^{t} (1+s)^{-1/2}| \tilde U(x+\delta(s),s)-\bar{U}(x)|ds \leq C\eps.
\ee
\et

\br\label{sprmk}
The conclusions of Theorem \ref{main1} hold also in the strictly parabolic case $F^0=\Id$, 
$\Re\,{\rm Spec}(B)>0$, with (A1)-(A3) replaced by hydrodynamic, or ``Majda-Pego'' stability of endstates
\cite{MP}. Indeed, one may in this case reduce the regularity requirement to $H^1$ on initial data.
For, in this case one obtains (stronger versions of) the same linearized estimates as in
Proposition \ref{greenbounds}- see \cite{ZH,HZ}-
while the nonlinear damping estimate of Proposition \ref{damping}
follows trivially from standard parabolic energy estimates using Lyapunov's lemma.
\er

\subsection{Discussion and open problems}\label{s:disc}
Theorem \ref{main1} together with Remark \ref{sprmk} recovers all previous orbital stability results
for pure Lax or undercompressive shocks, with only $L^1$ localization assumed on the data.
It does not recover ``phase-asymptotic'' orbital stability, in which the phase shift $\delta(t)$
is shown to converge to a limiting value as $t\to \infty$, which up to now requires perturbations
localized as $(1+|x|)^{-3/2}$ \cite{HZ,RZ,HRZ}, yielding convergence at rate $t^{-1/2}$.
However, it makes up for this in simplicity/transparency, avoiding the delicate pointwise nonlinear
estimates of \cite{HZ,RZ,HRZ}.
Moreover, the $L^1$ requirement substantially reduces the localization required on the initial perturbation.
And, as noted in \cite{MZ1,MZ2}, 
it is easy to see that there can be no {\it uniform rate of phase convergence} for $L^1$ perturbations,
by considering a sequence of identical compactly supported perturbations centered at $x=n$, with
$n=1, 2, \dots$ going out to infinity, and noticing that these take arbitrarily long to affect the shock,
but eventually do cause a nonzero shift in phase.

Theorem \ref{main1} was proven in the Lax case in \cite{MZ1,MZ2} by the use of ``refined'' linear estimates
valid in the Lax and overcompressive but not the undercompressive case.  
The proof of these refined estimates required considerable extra effort beyond the proof of basic pointwise 
Green function bounds. And, it was not at all clear from that analysis whether 
the conclusions could hold in the undercompressive case; see Remark \ref{degrmk}.
Thus, the present analysis both simplifies the treatment of \cite{MZ1,MZ2} in the Lax case, and
{\it answers
the 20-year open problem of nonlinear orbital stability with respect to $L^1$ perturbations 
in the undercompressive case.}

What replaces the refined linear estimates in our analysis here is the ``vertical estimate'' \eqref{vert} 
of Theorem \ref{main1}.  This in turn follows from the Strichartz-type estimates \eqref{aux1}--\eqref{aux3} of 
Proposition \ref{auxprop} below. 
These Strichartz estimates include a small amount of geometric information involving transversality 
of Gaussian propagator elements, without entering the full pointwise analysis of \cite{HZ,RZ,HRZ},
and this minimal treatment allows us to close the nonlinear stability arguments with initial perturbation 
merely $L^1$ data, rather than the algebraic localization $\sim (1+|x|)^{-3/2}$ of the pointwise references.

Such estimates were introduced in \cite{YZ} in the context of stability of discontinuous shock profiles
of the Saint-Venant equations for inclined shallow water flow, a relaxation system with scalar
equilibrium system, hence of {\it essentially scalar} type, with principal linear modes propagated inward
toward the shock along equilibrium characteristics.
The current analysis shows that these ideas are relevant also in a genuinely system case, for which
signals are propagated also away from the shock, along outgoing characteristic directions.
Specifically,  we find it necessary to use vertical estimates in the proof of the key Lemma \ref{zetalem}
not only to bound phase shift $\delta(s)$ as in \cite{YZ}, but already in the estimate of the interior
perturbation $w(\cdot, s)$: precisely in order to handle new terms coming from outgoing characteristic modes.
(See the italicized comment in the proof of Lemma \ref{zetalem}.)
Thus, we must include the vertical estimate already in the main iteration yielding orbital stability, 
rather than as a bootstrap argument as in \cite{YZ}.
The latter innovations we believe to be the key to showing nonlinear orbital stability also for 
discontinuous shock profiles of general relaxation systems, with {\it nonscalar} equilibrium equations,
resolving the key new difficulty of outgoing equilibrium characteristic modes.

In \cite{YZ}, we obtained the further result of phase-asymptotic orbital stability, with nonuniform rate depending
on the decay of the tail of the initial perturbation.
This was done with the help of an additional ``approximate incoming characteristic estimate,'' 
which however, relied much on the scalar property that all equilibrium characteristics propagated
inward toward the shock.
It is a very interesting open problem whether one could recover phase-asymptotic orbital stability
at the same tail rate also for undercompressive shocks, in the (necessarily, since they otherwise do not exist) 
system case, perhaps by further elaboration of the approximate characteristic estimates of \cite{YZ}.
Likewise, both $L^1\cap H^s$ orbital and phase-asymptotic orbital stability are important open problems for
overcompressive shock waves, and for
discontinuous shock profiles of any type for relaxation systems with nonscalar equilibrium equations.


\section{Preliminaries}\label{s:prelims}
We begin by recalling, essentially verbatim, some basic ingredients established in \cite{MZ1,MZ2}
and \cite{Z1}, along with a key new Strichartz estimate of a type introduced in \cite{YZ}.

\subsection{Nonlinear perturbation equations}\label{s:nlin}

We introduce now the equation
\be\label{Weq}
W_t + \tilde F(W)_x=(\tilde B(W)W_x)_x
\ee
where $W:=F^0(U)$, $\tilde{F}(W):=F^1({(F^0)}^{-1}(W))$, and 
$$
\tilde{B}(W):=B({(F^0)}^{-1}(W)) {(dF^0)}^{-1}(W)= \begin{pmatrix} 0 & 0\\ b_1& b_2 \end{pmatrix}.
$$
Denote the profile in $W$ coordinates by $\overline{W}:=F^0(\bar{U})$. 

Under the assumption (H1), following \cite{MZ2}, we may conveniently work with the ``centered'' perturbation
variable
\begin{equation}\label{pert}
	w(x,t):=F^0(\tilde U(x+\delta(t), t))-F^0(\bar U(x)),
\end{equation}
which satisfies
\begin{equation}
\label{perteq}
w_t-Lw=Q(w,w_x)_x+\dot \delta (t)(\overline{W}_x + w_x),
\end{equation}
where
\ba\label{Leq}
Lw &:= -(Aw)_x
+ \big(\tilde B(\overline{W}) w_x \big)_x,\quad Aw:=d\tilde F(\overline{W})  w
- \big(d\tilde{B}(\overline{W})  w\big) \overline{W}_x,
\ea
is the linearized operator about the wave and
\begin{equation}\label{Q}
\begin{aligned}
Q(w,w_x)&=\mathcal{O}(|w|^2 + |w||w_x|),\\
Q(w,w_x)_x&=\mathcal{O}(|w||w_x|+|w_x|^2 + |w||w_{xx}|)\\
\end{aligned}
\end{equation}
so long as $|w|$, $|w_x|$ and $|w_{xx}|$ remain bounded.
See \cite{MZ2} for further details.

\subsubsection{Green function representation}\label{s:grep}
Let $G$ be the Green function associated with the operator $L$ defined in \eqref{Leq}.
Recalling the standard fact that $\overline{W}'=dF^0(\bar U)\bar U'$ is a stationary
solution of the linearized equations for \eqref{Weq}, so that
$L\overline{W}'=0$, or
$$
\int^\infty_{-\infty}G(x,t;y)\overline{W}'(y)dy=e^{Lt}\overline{W}'(x)
=\overline{W}'(x),
$$
we have by Duhamel's principle:
\ba\label{duhamel}
  w(x,t)&=\int^\infty_{-\infty}G(x,t;y)w_0(y)\,dy  -\int^t_0 \int^\infty_{-\infty} G_y(x,t-s;y)
  \big(Q(w,w_x)+\dot \delta w \big) (y,s)\,dy\,ds\\ &\quad + \delta (t)\overline{W}'(x).
  \ea

\subsection{Linearized estimates}\label{s:lin}
We recall from \cite{MZ1,RZ} the following linearized estimates.\footnote{Though the bounds of \cite{MZ1}
are stated only in the Lax and overcompressive cases, the main part of the proof applies equally
well in the undercompressive case to yield the degraded bounds ($\gamma=1$) stated here. It is only
the ``refined estimates'' $\gamma=0$ of the Lax and overcompressive cases that do not apply here.}

\begin{proposition}[\cite{MZ1,RZ}]\label{greenbounds}
Under the assumptions of Theorem \ref{main1}, the Green function
$G(x,t;y)$ may be decomposed as $G=H+E+\tilde G$, where, for $y\leq  0$,
\be
H(x,t;y):=\sum\limits_{j=1}^{J} a_j^{*-1}(x) a_j^{*}(y)
 {\mathcal R}_j^*(x) \zeta_j^*(y,t) \delta(x-y-\bar a_j^* t)
 {\mathcal L}_j^{*t}(y),
\ee
where $a^{*}_j(x)$, $j=1,\dots,J\le(n-r)$,  denote the eigenvalues of $A_*\in \mathbb{R}^{(n-r)\times(n-r)}$ with multiplicity $m_j^*$ where
\be 
\label{A*}
A_*:=A_{11}-A_{12}b_2^{-1}b_1,\quad\text{with}\quad  A=:\left[\begin{array}{rr}A_{11}&A_{12}\\A_{21}&A_{22}\end{array}\right],\quad \tilde{B}(\overline{W})=:\left[\begin{array}{rr}0&0\\b_1&b_2\end{array}\right],
\ee 
the averaged convection rates $\bar{a}_j^*(x,t)$ denote the time-averages over $[0,t]$ of $a_j^*(z)$ along backward characteristic path $z_j^*=z_j^*(x,t)$ defined by $dz_j^*/dt=a_j^*(z_j^*)$, $z_j^*(t)=x$, the eigenmodes ${\mathcal L}_j^*$, ${\mathcal R}_j^*$ are defined by
$$
{\mathcal L}_j^*:=\left[\begin{array}{c}L^*_j\\0\end{array}\right],\quad {\mathcal R}_j^*:=\left[\begin{array}{c}R^*_j\\-b_2^{-1}b_1R^*_j\end{array}\right]
$$
where $L^*_j$, $R^*_j$ are $(n-r)\times m_j^*$ blocks of left and right eigenvectors of $A_*$ associated to $a^*_j$ satisfying the dynamical normalizing ${L^*_j}^t\partial_xR^*_j\equiv 0$ along with the usual static normalization ${L^*_j}^tR^*_j\equiv \Id_{m_j^*}$, the dissipation matrix $\zeta_j^*(x,t)\in R^{m_j\times m_j}$ is defined by the dissipative flow $d\zeta_j^*/dt=L_j^{*t}D_*R_j^*(z_j^*)\zeta_j^*$, $\zeta_j^*(0)=I_{m_j^*}$ where
$$
D_*(x):=A_{12}b_2^{-1}\left[A_{21}-A_{22}b_2^{-1}b_1+A^*b_2^{-1}b_1+b_2\partial_x(b_2^{-1}b_1)\right](x),
$$
and $\delta$ denotes Dirac mass,
\ba\label{e}
E(x,t;y)&:= \overline{W}'(x)e(y,t),\\
e(y,t)&:=\chi_{_{t\geq 1}}\sum_{a_i^{-}>0}
  \left({\rm errfn}\left(\frac{y+a_i^{-}t}{\sqrt{4\beta_i^{-}t}}\right)
  -{\rm errfn}\left(\frac{y-a_i^{-}t}{\sqrt{4\beta_i^{-}t}}\right)\right)
  l_{i}^{-}(y),
\ea
where ${\rm errfn} (z) := \frac{1}{2\pi} \int_{-\infty}^z e^{-\xi^2} d\xi$ denotes the error function, $a_i^\pm$ as ordered in \eqref{characteristics} are eigenvalues of $A_\pm=d\tilde{F}(\overline{W}_\pm)=dF^1(dF^0)^{-1}(\bar{U}_\pm)$ and $l_i^\pm(x)$/$r_i^\pm(x)$ are left/right eigenvectors of $d\tilde{F}(\overline{W}(x))$ associated with eigenvalues $a_j(x)$ for $x\gtrless 0$ ($a_j(x)\rightarrow a_j^\pm$ as $x\rightarrow \pm\infty$), satisfying
\ba \label{ljkbounds}
&|r_{i}^\pm(x)|\leq C, \qquad
|(\partial/\partial x) r_{i}^\pm(x)|\leq C\gamma e^{-\eta |x|}, \qquad | r_{i}^\pm(x)-r_{i}^\pm(\pm\infty)|\leq C\gamma e^{-\eta |x|},\\
&|l_{i}^\pm(x)|\leq C, \qquad\,
|(\partial/\partial x) l_{i}^\pm(x)|\leq C\gamma e^{-\eta |x|}, \qquad\, | l_{i}^\pm(x)-l_{i}^\pm(\pm\infty)|\leq C\gamma e^{-\eta |x|},
\ea  $\beta_k^\pm>0$ are time-asymptotic scalar diffusion rates $\beta_k^\pm:=(l_k\tilde{B}r_k)^{\pm}$, and 
\begin{equation}\label{Gbounds}
\begin{aligned}
	\,\partial_{x,y}^\alpha & \tilde G(x,t;y)\\
=  
	&\, 
	\big(t^{-|\alpha_x|/2}+ |\alpha_x| e^{-\eta|x|}\big)\big(t^{-|\alpha_y|/2}+ |\alpha_y|\gamma e^{-\eta|y|} \big)\\
&\times\Big( \sum_{k=1}^n r_k^-(x)l_k^-(y) e^{-\eta x^+}
\mathcal{O}(t^{-1/2}e^{-(x-y-a_k^{-} t)^2/(Mt)}) \\
&\qquad+
\sum_{ a_j^{-} < 0,\,a_k^{-} > 0} 
\chi_{\{ |a_k^{-} t|\geq  |y| \}}r_j^-(x)l_k^-(y) 
e^{-\eta x^+}\mathcal{O}(t^{-1/2} e^{-(x-a_j^{-}(t-|y/a_k^{-}|))^2/(Mt)})
\\
&\qquad+\sum_{ a_j^{+}> 0,\,a_k^{-} > 0} 
\chi_{\{ |a_k^{-} t|\geq  |y| \}}r_j^+(x)l_k^-(y) e^{-\eta x^-}
\mathcal{O}(t^{-1/2} e^{-(x-a_j^{+} (t-|y/a_k^{-}|))^2/(Mt)})
\Big)\\&+\mathcal{O}(e^{-\eta(|x-y|+t)}),
\end{aligned}
\end{equation}
for $0\leq |\alpha| \leq 2$, $0\leq |\alpha_y|\leq 1$, some $\eta$, $C$, $M>0$, 
where $x^\pm$ denotes the positive/negative
part of $x$, 
$\gamma$ is 1 for undercompressive profiles and 
0 otherwise. 
Symmetric estimates hold for $y\geq 0$.
\end{proposition}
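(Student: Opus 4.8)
\textbf{Proof plan for Proposition \ref{greenbounds}.}
The plan is essentially to import, verbatim where possible, the construction of \cite{MZ1} (for the Lax and overcompressive cases) and to check that its main branch—the part that does \emph{not} invoke the refined $\gamma=0$ cancellations—produces precisely the degraded ($\gamma=1$) bounds as stated, uniformly in all three cases, including undercompressive. First I would recall the standard inverse-Laplace-transform representation
$G(x,t;y)=\frac{1}{2\pi i}\oint_{\Gamma} e^{\lambda t}(L-\lambda)^{-1}(x,y)\,d\lambda$,
and recall from \cite{MZ1,Z1} that under (A1)--(A3), (H1), and ($\mathcal D$), the resolvent kernel $(L-\lambda)^{-1}(x,y)=G_\lambda(x,y)$ is meromorphic on a neighborhood of $\{\Re\lambda\ge 0\}$ with only a simple pole at $\lambda=0$, and that it is constructed from decaying solutions of the eigenvalue ODE $(L-\lambda)\phi=0$ by the usual Gap/Conjugation-lemma machinery (Kawashima-type block-diagonalization of the first-order system far from the shock, matched across the shock layer via the Evans function $D(\lambda)$, whose only nonstable zero is the simple one at the origin). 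The three pieces of the decomposition $G=H+E+\tilde G$ arise from three distinct contributions along the contour: $H$ from the large-$|\lambda|$ (short-time/hyperbolic) regime where the characteristic ODEs $dz_j^*/dt=a_j^*(z_j^*)$ and the dissipative flows $\zeta_j^*$ appear through a WKB/convolution-along-characteristics expansion of the singular part; $E$ from the residue at the simple pole $\lambda=0$, which by ($\mathcal D$) produces exactly the rank-one excited term $\overline W'(x)e(y,t)$ with $e$ assembled from the scattering coefficients of the outgoing $a_i^-$- and $a_i^+$-modes (these are the Burgers-type error-function terms); and $\tilde G$ from the remaining small- and intermediate-$|\lambda|$ part, estimated by deforming $\Gamma$ into the stable half-plane along the standard $\lambda=-\theta k^2+ik$–type curves, giving the Gaussian kernels $t^{-1/2}e^{-(x-y-a_k^\mp t)^2/(Mt)}$ convected along incoming and outgoing characteristics, with the reflection/transmission bookkeeping encoded in the three sums of \eqref{Gbounds}.

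The key step, and the one that needs care, is tracking the eigenvector regularity constants and seeing where $\gamma$ enters. In the Lax/overcompressive case \cite{MZ1} one exploits that certain scattering coefficients (those attached to \emph{incoming} characteristic modes) vanish to higher order in $\lambda$ near the origin, which upgrades $\gamma e^{-\eta|y|}$ factors to genuine decay ($\gamma=0$); this is the ``refined'' estimate. I would show that dropping this upgrade—i.e.\ keeping the crude bound in which the profile-dependent corrections to $l_i^\pm,r_i^\pm$ and to the convection rates $a_j(x)$ are merely $O(e^{-\eta|x|})$ rather than zero—is exactly the content of \eqref{ljkbounds} with $\gamma=1$, and that every estimate in the main branch of \cite{MZ1,RZ} goes through verbatim with this substitution, since that branch never uses the sign structure $i_+=i_-+1$ of a Lax shock: it uses only (i) consistent splitting into $i_\pm$ stable/unstable modes at $\pm\infty$ (which holds by \eqref{characteristics} regardless of type), (ii) the simple-pole structure at $\lambda=0$ from ($\mathcal D$) (which is assumed), and (iii) the no-stable-point-spectrum and no-other-boundary-spectrum facts (also assumed / from \cite{MZ1,Z1}). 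Concretely: in the high-frequency regime the singular $H$-part comes from the principal symbol of $L$ restricted to the hyperbolic block $A_*$ of \eqref{A*}, and the WKB transport/dissipation equations defining $\zeta_j^*$, $z_j^*$, $D_*$ are type-independent; in the low-frequency regime the Green function bounds of \cite{RZ} (which were already proved allowing $\gamma=1$, per the footnote) supply $\tilde G$ directly.

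The main obstacle I anticipate is bookkeeping rather than a genuine new difficulty: namely, making sure that in the undercompressive case the \emph{outgoing} modes (of which there are more than in the Lax case, since $i_+\le i_-$ means fewer incoming characteristics) do not generate any term absent from the template \eqref{Gbounds}. Here one checks that the third sum—over $a_j^+>0,\ a_k^->0$, with cutoff $\chi_{\{|a_k^- t|\ge|y|\}}$ and weight $e^{-\eta x^-}$—already accounts for all transmitted outgoing contributions, and that the $\overline W'(x)e(y,t)$ term in $E$ absorbs, via the residue at $\lambda=0$, precisely the ``secular'' parts of these outgoing modes that would otherwise fail to decay; the cancellation producing the bounded, error-function form of $e(y,t)$ is a direct consequence of ($\mathcal D$) giving a \emph{simple} zero (no generalized eigenfunctions), so the excited term is rank one in $x$ with the stated $y$-profile. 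Once this accounting is verified, one concludes by the standard argument of \cite{MZ1}: close the contour, estimate each of the three regimes, and collect the pieces into $H+E+\tilde G$; the symmetric statement for $y\ge 0$ follows by the reflection $x\mapsto -x$, $y\mapsto -y$, which exchanges the roles of $\pm$ and leaves the structure of \eqref{Gbounds} invariant. I would also remark that the hypothesis $0\le|\alpha|\le 2$, $0\le|\alpha_y|\le1$ matches exactly the derivative count used in the nonlinear estimates (through $G_y$ and $\partial_x^2 G$), so no further regularity of $G$ in the spatial variables is needed.
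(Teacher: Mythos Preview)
Your proposal is correct and matches the paper's approach. The paper does not supply an independent proof of this proposition at all: it is stated as a cited result from \cite{MZ1,RZ}, with the sole justification being the footnote observing that the main branch of the \cite{MZ1} argument (the part not invoking the refined $\gamma=0$ cancellations) applies verbatim in the undercompressive case to yield the degraded $\gamma=1$ bounds. Your outline is precisely an expansion of that footnote---you have correctly identified the inverse-Laplace/contour decomposition into high-frequency ($H$), residue ($E$), and low/intermediate-frequency ($\tilde G$) pieces, and correctly isolated the one place where shock type enters (the vanishing-to-higher-order of incoming scattering coefficients near $\lambda=0$, which is what fails for undercompressive waves and forces $\gamma=1$). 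There is nothing to correct; your plan is more detailed than what the paper itself records.
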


\br\label{degrmk}
The difference between undercompressive and Lax estimates, alluded to in Section \ref{s:disc},
is precisely the term $|\alpha_y|e^{-\eta|y|}$ appearing in \eqref{Gbounds} for undercompressive
($\gamma=1$) but not Lax ($\gamma=0$) type shocks.
One may check that this leads to nonlinear interaction terms not compatible with the argument of \cite{MZ2}.
\er

{}From \eqref{e} and \eqref{ljkbounds}, we obtain by straightforward
calculation (see \cite{MZ1}) the bounds
\begin{equation}\label{ebounds}
\begin{aligned}
|e(y,t)|&\leq C\chi_{_{t\geq 1}}\sum_{a_i^->0}
  \left({\rm errfn}\bigg(\frac{y+a_i^{-}t}{\sqrt{4\beta_i^{-}t}}\bigg)
  -{\rm errfn}\bigg(\frac{y-a_i^{-}t}{\sqrt{4\beta_i^-t}}\bigg)\right),\\
|e (y,t) - e (y,+\infty)| &\leq C \chi_{_{t\geq 1}}{\rm errfn} \Big(\frac{|y|-at}{M\sqrt{t}}\Big), 
\quad \text{for some }\, a>0, \\
|\partial_t  e(y,t)|&\leq C \chi_{_{t\geq 1}} t^{-1/2} \sum_{a_i^->0} e^{-|y+a_i^-t|^2/(Mt)},\\
|\partial_y  e(y,t)|&\leq C\chi_{_{t\geq 1}} t^{-1/2} \sum_{a_i^->0} e^{-|y+a_i^-t|^2/(Mt)}\\
&\quad +
C\chi_{_{t\geq 1}}\gamma e^{-\eta|y|}
  \left({\rm errfn}\bigg(\frac{y+a_i^{-}t}{\sqrt{4\beta_i^{-}t}}\bigg)
  -{\rm errfn}\bigg(\frac{y-a_i^{-}t}{\sqrt{4\beta_i^{-}t}}\bigg)\right),\\
|\partial_y e (y,t) - \partial_y e(y,+\infty)|
 &\leq  C \chi_{_{t\geq 1}}t^{-1/2} \sum_{a_i^->0} e^{-|y+a_i^-t|^2/(Mt)} \\
|\partial_{yt}e(y,t)|&\leq C\chi_{_{t\geq 1}}
(t^{-1}+\gamma t^{-1/2}e^{-\eta|y|}) \sum_{a_i^->0} e^{-|y+a_i^-t|^2/(Mt)}\\
\end{aligned}
\end{equation}
for $y\leq 0$, and symmetrically for $y\geq  0$, where $\gamma$ as above
is $1$ for undercompressive profiles and zero otherwise. 

\subsection{Auxiliary estimates}\label{s:aux}
To prove the key vertical estimate \eqref{vert}, we shall need the following ``Strichartz-type'' bounds similar
to those established in \cite[Lemma 8.8]{YZ}.

\begin{proposition}[Auxiliary integral bounds]\label{auxprop}
	For $\tilde{G}$ and $H$ terms, there hold:
\be\label{aux1}
\int_{0}^{t} \left| \int_{-\infty}^{+\infty} (\tilde{G}+H)(x,s;y)f(y)dy\right| \, ds \leq C |f|_{L^1\cap L^\infty},
\ee
\ba
\label{aux2}
\int_0^t (1+s)^{-1/2}\left|\int_0^s \int_{-\infty}^{+\infty}\tilde{G}_y(x,s-\tau ;y)f(y,\tau)dy  \, d\tau\right| \,ds
\leq C\int_0^t  (1+s)^{-1/2 +\eps } |f(\cdot, s)|_{L^{2}} ds,
\ea
and
\ba
\label{aux3}
\int_0^t (1+s)^{-1/2}\left|\int_0^s  \int_{-\infty}^{+\infty}H(x,s-\tau ;y)f(y,\tau)dy\, d\tau\right| \, ds
\leq C\int_0^t  (1+s)^{-1/2 } |f(\cdot, s)|_{L^\infty } ds,
\ea
for all $t\geq  0$ and $x\gtrless 0$, any $\eps>0$, and some $C=C(\eps)>0$. 
\end{proposition}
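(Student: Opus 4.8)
The plan is to reduce each of the three bounds to a model one-dimensional integral computation involving the Gaussian and Dirac pieces appearing in Proposition~\ref{greenbounds}. For \eqref{aux1}, the $H$ contribution is, by definition, a finite sum of translated delta masses $\delta(x-y-\bar a_j^* t)$ multiplied by smooth bounded factors and by $\zeta_j^*$ (which is bounded); integrating in $y$ kills the delta and leaves a bounded-in-$s$ evaluation of $f$ along the characteristic $y=x-\bar a_j^* s$, so $\int_0^t|\cdots|\,ds$ is controlled by $\int_0^\infty |f(x-\bar a_j^* s)|\,ds$, and since $\bar a_j^*$ is bounded away from zero (the $a_j^*$ are convection rates for the outgoing hyperbolic block, all nonzero on the relevant half-line) a change of variables $z=x-\bar a_j^* s$ bounds this by $C|f|_{L^1}$. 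The $\tilde G$ contribution splits according to \eqref{Gbounds}: the exponentially localized remainder $\mathcal O(e^{-\eta(|x-y|+s)})$ gives $\int_0^\infty e^{-\eta s}\,ds\cdot |f|_{L^\infty}\cdot$(localization in $y$) $\lesssim |f|_{L^1\cap L^\infty}$, while each Gaussian term $t^{-1/2}e^{-(x-y-a_k^\mp s)^2/(Ms)}$ convolved against $f$ and then integrated $ds$ is handled by splitting $s\in[0,1]$ (where one uses $|f|_{L^1}$ against the bounded-mass heat kernel after noting $\int|G|\,dy\lesssim 1$) and $s\geq 1$ (where one uses $|f|_{L^\infty}$ times $\int_1^\infty s^{-1/2}\cdot(\text{Gaussian integrated in }y)\,ds$; the point is that with $x$ fixed and $a_k\neq 0$, $\int_1^\infty s^{-1/2}e^{-(x-a_ks)^2/(Ms)}\,ds<\infty$ because the Gaussian in $s$ decays like $e^{-a_k^2 s/M}$ for large $s$ — this is exactly the transversality/non-stationarity of the characteristic). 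The $\chi$-cutoff ``reflected'' Gaussian terms are treated identically, the cutoff only helping.

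For \eqref{aux2}, write the left side as $\int_0^t (1+s)^{-1/2}|K(x,s)|\,ds$ with $K(x,s)=\int_0^s\int \tilde G_y(x,s-\tau;y)f(y,\tau)\,dy\,d\tau$. I would estimate $|K(x,s)|$ pointwise by bounding the inner $y$-integral via Cauchy--Schwarz: $\big|\int \tilde G_y(x,s-\tau;y)f(y,\tau)\,dy\big|\leq |\tilde G_y(x,s-\tau;\cdot)|_{L^2_y}\,|f(\cdot,\tau)|_{L^2}$. From \eqref{Gbounds} with $|\alpha_y|=1$, the $L^2_y$ norm of the Gaussian pieces in $y$ at elapsed time $s-\tau$ scales like $(s-\tau)^{-1/2}\cdot(s-\tau)^{-1/4}\cdot(1+(s-\tau)^{-1/2})$ roughly (one half-power from the $\partial_y$ prefactor, one quarter from the $L^2$ of a width-$\sqrt{s-\tau}$ Gaussian of height $(s-\tau)^{-1/2}$), plus an $O(1)$ contribution from the $e^{-\eta|y|}\gamma$ piece and the exponential remainder; the resulting kernel in the $\tau$-variable is integrable and produces, after the $ds$ integration against $(1+s)^{-1/2}$ and an application of the Hardy--Littlewood--Sobolev or a direct Schur-test argument, a bound $C\int_0^t (1+s)^{-1/2+\eps}|f(\cdot,s)|_{L^2}\,ds$; the arbitrarily small $\eps>0$ loss is exactly what absorbs the borderline logarithmic divergence coming from the $(s-\tau)^{-1}$-type singularity of the composed kernel near $\tau=s$ combined with the slow $(1+s)^{-1/2}$ weight. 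For \eqref{aux3}, the $H$ term again collapses the $y$-integral onto the characteristic line, giving $\int \delta(\cdots)f(y,\tau)\,dy = f(x-\bar a_j^*(s-\tau),\tau)\cdot(\text{bounded})$, so $|K(x,s)|\leq C\int_0^s |f(\cdot,\tau)|_{L^\infty}\,d\tau$; inserting this and integrating $(1+s)^{-1/2}$ times it over $[0,t]$, one interchanges the order of integration and bounds $\int_\tau^t (1+s)^{-1/2}\,ds$... but that diverges, so instead I would keep the characteristic structure: the key is that along $H$ the signal reaching the fixed point $x$ at time $s$ from source time $\tau$ travels at fixed nonzero speed, so $s-\tau$ and the source location are slaved, and one gets $\int_0^t(1+s)^{-1/2}\big|\int_0^s f(x-\bar a_j^*(s-\tau),\tau)\,d\tau\big|\,ds$; changing variables and using that $\bar a_j^*\neq 0$ converts the inner integral into a spatial integral of $|f(\cdot,\tau)|_{L^\infty}$-bounded quantity over an interval of length $O(s)$, and a Fubini/Schur argument then yields the stated $\int_0^t(1+s)^{-1/2}|f(\cdot,s)|_{L^\infty}\,ds$ bound — I should double-check the exact bookkeeping here, as this is the most delicate of the three.

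The main obstacle I anticipate is the borderline time-decay bookkeeping in \eqref{aux2}: the composition of the $\partial_y$-differentiated Gaussian kernel (which has an extra $(s-\tau)^{-1/2}$) with the outer $(1+s)^{-1/2}$ weight sits right at the edge of integrability, which is why the statement permits the $+\eps$ loss and a constant $C(\eps)$ blowing up as $\eps\to 0$. The clean way to organize this is probably to prove a Schur-type lemma: show the kernel $\mathcal K(s,\tau):=(1+s)^{-1/2}|\tilde G_y(x,s-\tau;\cdot)|_{L^2_y}(1+\tau)^{1/2-\eps}$ satisfies $\sup_s\int_0^s \mathcal K\,d\tau<\infty$ and $\sup_\tau\int_\tau^t \mathcal K\,ds<\infty$, the latter being where the $\eps$ is spent, and then the mapping $f\mapsto \int_0^s(\cdots)$ is bounded from the weighted $L^1_s L^2_x$ space to itself. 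A secondary subtlety, flagged implicitly by the hypothesis restricting to fixed $x\gtrless 0$, is that all the favorable decay in $s$ for the Gaussian terms relies on the characteristic speeds $a_k^\pm$, $\bar a_j^*$ being bounded away from zero and on $x$ being fixed (so the stationary point $s=x/a_k$ is a fixed finite time, contributing only a bounded amount); I would make this quantitative via the elementary estimate $\int_1^\infty s^{-1/2}e^{-(x-cs)^2/(Ms)}\,ds \leq C(|x|,c,M)$ valid for any $c\neq 0$, which is the one-line computation underlying every ``transversality'' gain in the proof.
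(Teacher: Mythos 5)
Your treatment of \eqref{aux1} is essentially the paper's argument (transversality of the Gaussian characteristics: for $a\neq 0$ fixed and $y$ fixed, $\int_0^\infty s^{-1/2}e^{-(x-y-as)^2/(Ms)}\,ds\leq C$ uniformly, then pair with $|f|_{L^1}$), though note that your $s\geq 1$ branch as written pairs $|f|_{L^\infty}$ with the Gaussian \emph{integrated in $y$}, which is $\sim s^{1/2}\cdot s^{-1/2}=O(1)$ and makes $\int_1^\infty(\cdots)\,ds$ diverge; the correct order is to integrate in $s$ at fixed $y$ and then use $|f|_{L^1}$. The two serious problems are in \eqref{aux2} and \eqref{aux3}.

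For \eqref{aux2}, applying Cauchy--Schwarz in $y$ \emph{first} throws away exactly the transversality gain that the estimate needs, and the resulting bound is quantitatively too weak. Indeed $|\tilde G_y(x,\sigma;\cdot)|_{L^2_y}\sim \sigma^{-3/4}$ for $\sigma\geq 1$ (height $\sigma^{-1}$, width $\sigma^{1/2}$), and $\int_\tau^t(1+s)^{-1/2}(s-\tau)^{-3/4}\,ds\sim (1+\tau)^{-1/4}$, so your Schur/HLS step can only produce $C\int_0^t(1+\tau)^{-1/4}|f(\cdot,\tau)|_{L^2}\,d\tau$, not the claimed $(1+\tau)^{-1/2+\eps}$ weight (with $|f|_{L^2}\sim(1+\tau)^{-3/4}$ in the application, $(1+\tau)^{-1/4}$ gives a divergent $\int(1+\tau)^{-1}d\tau$, so this loss is fatal for the nonlinear iteration). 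The paper's proof instead integrates the Gaussian in $s$ first at fixed $y$: the kernel $y\mapsto\int_1^t s^{-1-\eps}e^{-(as-|y|)^2/(bs)}\,ds$ is concentrated where $|y|\sim a s$ and has $L^2_y$ norm $\leq C(\eps)$ uniformly in $t$ (this is \eqref{test}, from \cite{YZ}); only then does one apply Cauchy--Schwarz in $y$ against $|f(\cdot,\tau)|_{L^2}$. You have also misplaced the role of $\eps$: it is not absorbing a singularity at $\tau=s$, but rather securing $L^2_y$-integrability of that $s$-integrated kernel at spatial infinity ($|y|^{-1/2-\eps}$ vs. $|y|^{-1/2}$).

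For \eqref{aux3}, the missing ingredient is that $H$ decays \emph{exponentially in time}: the dissipation matrix $\zeta_j^*$, governed by $d\zeta_j^*/dt=L_j^{*t}D_*R_j^*\zeta_j^*$, satisfies $|\zeta_j^*(y,t)|\leq Ce^{-\eta t}$ under (A1)--(A3) (this is the whole point of the Kawashima-type coupling), so $\big|\int H(x,\sigma;y)f(y)\,dy\big|\leq Ce^{-\eta\sigma}|f|_{L^\infty}$ and \eqref{aux3} follows by Fubini since $\int_\tau^t(1+s)^{-1/2}e^{-\eta(s-\tau)}\,ds\leq C(1+\tau)^{-1/2}$. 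Your attempted replacement via the characteristic change of variables cannot work: for a purely transported, undamped delta kernel the estimate is simply false. Take $f(y,\tau)=\chi_{[0,1]}(\tau)$, independent of $y$; then the left side of \eqref{aux3} would be $\gtrsim\int_1^t(1+s)^{-1/2}\,ds\sim t^{1/2}$, while the right side is $O(1)$. So no bookkeeping of the transport structure alone can close this step; the exponential temporal damping of $H$ is essential, and it is also what the paper uses (more simply) for the $H$ part of \eqref{aux1}.
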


\begin{proof}
	For the part contributed by $\tilde{G}$ in the estimate \eqref{aux1}, changing the order of integration, we obtain
$$
\begin{aligned}
	\int_{0}^{t} \left| \int_{-\infty}^{+\infty} \tilde{G}(x,s;y)f(y)dy\right| \, ds &\leq
\int_{0}^{t}  \int_{-\infty}^{+\infty} |\tilde{G}(x,s;y)||f(y)| dy \, ds \\
&= 
\int_{-\infty}^{+\infty}  |f(y)| \int_{0}^{t}  |\tilde{G}(x,s;y)| \, ds \, dy 
 \leq C |f|_{L^1},
\end{aligned}
$$
where the bound $\int_{0}^{t}  |\tilde{G}(x,s;y)| \, ds\leq C$ may be verified 
as in \cite[Lemma 8.8]{YZ}, by integrating a sum $\theta:=\sum_j \theta_j$ of Gaussian kernels 
	\be\label{theta}
	\theta_j(z,s):=(s)^{-1/2}e^{-(z-a_j s)^2/b_j s}
	\ee
	moving with nonzero speeds $a_j \neq 0$.\footnote{In \cite{YZ} there was only a single Gaussian kernel, 
	but the same estimates hold, by superposition, for the sum.}
	The estimate of the part contributed by $H$ can be seen by using 
	$$\left|\int_{-\infty}^{+\infty}H(x,s;y)f(y)dy\right|\leq e^{-\eta s}|f|_{L^\infty},$$
	and then integrating in time. For the bound \eqref{aux2}, based on the estimate
\ba
\left|\label{test}
\int_1^{t}s^{-1-\eps} e^{-\frac{(as-|\cdot|)^2}{bs}} \, ds\right|_{L^2}\leq C, \quad \text{ for $a\neq 0$, $C=C(\eps)$ independent of $t$,}  
\ea
obtained in \cite[(8.25)]{YZ} and the estimate $\tilde G_y=\theta(x-y,t)\mathcal{O}(t^{-1/2}+e^{-\eta |y|})$, the lefthand side of \eqref{aux2} can be bounded by 
$$
\begin{aligned}
&C\int_0^t \int_0^s \int_{-\infty}^{+\infty}(1+s)^{-1/2}(s-\tau)^{-1/2}\theta(x-y,s-\tau)|f(y,\tau)|dy  \, d\tau \,ds\\
+&C\int_0^t \int_0^s \int_{-\infty}^{+\infty}(1+s)^{-1/2}e^{-\eta|y|}\theta(x-y,s-\tau)|f(y,\tau)|dy  \, d\tau \,ds
\end{aligned}$$
where the first term can be bounded by the right hand side of \eqref{aux2} as in \cite[Lemma 8.8]{YZ}, and the second term can be estimated as
$$
\begin{aligned}
&\,C\int_0^t \int_0^s \int_{-\infty}^{+\infty}(1+s)^{-1/2}e^{-\eta|y|}\theta(x-y,s-\tau)|f(y,\tau)|dy  \, d\tau \,ds\\
=&\,C\int_0^t\int_{-\infty}^{+\infty}e^{-\eta|y|}|f(y,\tau)| \int_\tau^t (1+s)^{-1/2}\theta(x-y,s-\tau)ds  \,dy \,d\tau\\
\leq&\,C\int_0^t\int_{-\infty}^{+\infty}e^{-\eta|y|}|f(y,\tau)|(1+\tau)^{-1/2} \int_\tau^t \theta(x-y,s-\tau)ds  \,dy \,d\tau\\
\leq&\,C\int_0^t(1+\tau)^{-1/2}\int_{-\infty}^{+\infty}e^{-\eta|y|}|f(y,\tau)| \,dy \,d\tau\leq C\int_0^t(1+\tau)^{-1/2} |f(y,\tau)|_{L^2}\,d\tau.
\end{aligned}
$$
Finally, the bound \eqref{aux3} can be shown by the same calculation as  in \cite[(8.24)]{YZ}.
\end{proof}

\subsection{Nonlinear damping estimate}\label{s:damping}
We recall, finally, the following estimate established in \cite{Z1}.

\begin{proposition}[Nonlinear damping estimate \cite{Z1}]\label{damping}
Under the hypotheses of Theorem \ref{main1}
let $w_0\in H^s$, $s\geq  4$, and suppose that, for $0\leq t\leq T$, both the supremum of $ |\dot\delta|$ and the 
$W^{s-1,\infty}$ norm of the solution 
$w$ of \eqref{perteq} remain sufficiently small.
Then, for all $0\leq t\leq T$, and some $\nu>0$,
\be 
|w(t)|_{H^s}^2\leq C |w_0|^2_{H^s}e^{-\nu t}
+C\int_0^t e^{-\nu (t-\tau )}(|w|_{L^2}^2+ |\dot\delta|^2)(\tau)\, d\tau.
\label{ebounds2}
\ee
\end{proposition}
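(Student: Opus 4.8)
The plan is to derive \eqref{ebounds2} from a Kawashima-type weighted energy estimate applied directly to the perturbation equation \eqref{perteq}, in the spirit of \cite{MZ2,Z1}. In the $W=F^0(U)$ variables, hypothesis (A1) provides a symmetric, positive-definite symmetrizer $A^0=A^0(x)$ for the linearized convection operator $A$ of \eqref{Leq}, with $A^0A$ symmetric at the endstates and $\mathcal{O}(e^{-\eta|x|})$-close to symmetric along the profile, while (A3) makes the parabolic part of the diffusion $\tilde B(\overline{W})$ uniformly positive definite. Differentiating \eqref{perteq} $k$ times in $x$ for $1\le k\le s$, pairing with $A^0\partial_x^k w$, and integrating, the viscous term $-\int \partial_x^k w\cdot A^0(\tilde B(\overline{W})\partial_x^k w_x)_x\,dx$ yields after one integration by parts a nonnegative dissipation controlling $|\partial_x^{k+1}w^{II}|_{L^2}^2$, whereas the hyperbolic term $-\int \partial_x^k w\cdot A^0(A\,\partial_x^k w)_x\,dx$ is a perfect derivative up to an $\mathcal{O}(e^{-\eta|x|})|\partial_x^k w|^2$ error. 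To recover dissipation of the remaining, hyperbolic components $\partial_x^k w^{I}$, I would add to the basic energy a Kawashima correction of the form $\eps\int \langle K(x)\,\partial_x^{k-1}w,\partial_x^k w\rangle\,dx$, where $K$ is skew-symmetric, smooth, and exponentially decaying in its derivatives, built from the genuine-coupling hypothesis (A2) on the endstates and extended along the profile; for $\eps$ small this turns the resulting functional $\mathcal E(t)\simeq |w(t)|_{H^s}^2$ into one obeying a coercive estimate with leading dissipation $-\theta\,|\partial_x w|_{H^{s-1}}^2$, $\theta>0$.

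It then remains to absorb all the error and forcing terms into $\tfrac{\theta}{2}|\partial_x w|_{H^{s-1}}^2 + C(|w|_{L^2}^2+|\dot\delta|^2)$. The top-order commutators $[\partial_x^k,\tilde B(\overline{W})]\partial_x w$, $[\partial_x^k,A]w$, together with the $(A^0)_x$- and $K_x$-errors and the $\mathcal{O}(e^{-\eta|x|})|\partial_x^k w|^2$ term above, all carry coefficients that are either exponentially localized or bounded by $|w|_{W^{s-1,\infty}}$; using $s\ge 4$ (so that $|w|_{W^{s-1,\infty}}\le C|w|_{H^s}$ by Sobolev embedding), the Moser/Gagliardo–Nirenberg inequalities, and — for the localized zero-order pieces — an integration by parts followed by interpolation, each is bounded by $\tfrac{\theta}{2}|\partial_x w|_{H^{s-1}}^2 + C|w|_{L^2}^2$. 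For the forcing $Q(w,w_x)_x+\dot\delta(\overline{W}_x+w_x)$: the term $\dot\delta\,\overline{W}_x$ contributes $\le C|\dot\delta|^2$ since $\overline{W}_x$ is smooth and exponentially localized, and $\dot\delta\,w_x$ contributes $\le C\sup|\dot\delta|\,|\partial_x w|_{H^{s-1}}^2$. Since $Q=\mathcal{O}(|w|^2+|w||w_x|)$ enters under an $x$-derivative — and, crucially, the derivative-dependent part of $Q$ lives, by the block structure of $\tilde B$, only in the parabolically damped component — every top-order contribution of $Q$ is, after transferring derivatives by parts onto the small or exponentially localized coefficients, bounded by $C|w|_{W^{s-1,\infty}}\,\mathcal E$; the lower-order nonlinear pieces are handled the same way via \eqref{Q}. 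Collecting, and using the smallness of $|w|_{W^{s-1,\infty}}$ and $\sup|\dot\delta|$, gives
\[
\tfrac{d}{dt}\mathcal E(t)\ \le\ -\theta\,|\partial_x w(t)|_{H^{s-1}}^2 + C\big(|w|_{L^2}^2+|\dot\delta|^2\big)(t).
\]

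Finally, since the $L^2$-component of $w$ itself is the only $H^s$-mode not dominated by $|\partial_x w|_{H^{s-1}}$, one has $|\partial_x w|_{H^{s-1}}^2\ge c\,\mathcal E - C|w|_{L^2}^2$, whence $\tfrac{d}{dt}\mathcal E\le -\nu\mathcal E + C(|w|_{L^2}^2+|\dot\delta|^2)$ for some $\nu>0$; integrating this linear differential inequality and using $\mathcal E(t)\simeq |w(t)|_{H^s}^2$ yields \eqref{ebounds2}. I expect the main obstacle not to be the Grönwall step or the routine Moser bookkeeping, but the construction of the compensating matrix $K$ and the verification that (A1)--(A3) furnish a Kawashima-type coercivity holding uniformly along the profile — i.e., that the corrected quadratic form simultaneously controls $|\partial_x^j w|_{L^2}^2$ for all $1\le j\le s$ — together with the careful use of the parabolic block structure to keep the top-order nonlinear terms in hand; both are precisely the ingredients imported from \cite{Z1}.
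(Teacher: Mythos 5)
The paper offers no proof of this proposition: it is recalled verbatim from \cite{Z1} (see also \cite{MZ2}), where it is established by precisely the argument you outline — a symmetrizer furnished by (A1), a skew-symmetric Kawashima compensator built from the genuine-coupling condition (A2) to recover dissipation of the derivatives of the hyperbolic block, absorption of commutators, of $Q$, and of $\dot\delta\,w_x$ using smallness of $|w|_{W^{s-1,\infty}}$ and $\sup|\dot\delta|$, the localized terms $\dot\delta\,\overline{W}_x$ and $\mathcal{O}(e^{-\eta|x|})|w|^2$ contributing the $|\dot\delta|^2$ and $|w|_{L^2}^2$ sources, followed by Gr\"onwall. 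Your sketch is a faithful reconstruction of that argument, correctly identifying the two genuinely nontrivial ingredients (uniform coercivity of the compensated form along the profile, and the use of the block structure of $\tilde B$ at top order), so it takes essentially the same route as the cited source.
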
 

\section{Nonlinear stability of Lax or undercompressive profiles}\label{uc}
We are now ready to carry out our nonlinear stability analysis.
Defining 
\begin{equation}
  \delta (t)=-\int^\infty_{-\infty}e(y,t) w_0(y)\,dy  +\int^t_0\int^{+\infty}_{-\infty} e_{y}(y,t-s)(Q(w,w_x)+
  \dot \delta\, w)(y,s) dy ds, 
 \label{delta}
\end{equation}
following \cite{ZH}, \cite{Z1}, \cite{MZ1}--\cite{MZ2},
where $e$ is defined as in \eqref{e} (that is, $e=\sum_i e_i$),
and recalling the decomposition $G=H+E+\tilde G$,
we obtain finally the {\it reduced equations}:
\ba 
  w(x,t)=&\int^\infty_{-\infty} \tilde G(x,t;y)w_0(y)\,dy-\int^t_0\int^\infty_{-\infty}\tilde G_y(x,t-s;y)(Q(w,w_x)+
  \dot \delta w)(y,s) dy \, ds\\
 &+\int^\infty_{-\infty} H(x,t;y)w_0(y)\,dy-\int^t_0\int^\infty_{-\infty} H(x,t-s;y)(Q_y(w,w_x)+
  \dot \delta w_y)(y,s) dy \, ds
  \label{veq}
\ea 
and, differentiating (\ref{delta}) with respect to $t$,
and observing that 
$e_y (y,s)\rightharpoondown 0$ as $s \to 0$, as the difference of 
approaching heat kernels:
\begin{equation}
  \dot \delta (t)=-\int^\infty_{-\infty}e_t(y,t) w_0(y)\,dy
  +\int^t_0\int^{+\infty}_{-\infty} e_{yt}(y,t-s)(Q(w,w_x)+
  \dot \delta w)(y,s)\,dy\,ds. 
\label{deltadot}
\end{equation}
\medskip
Define
\ba
 \zeta(t):= \sup_{0\leq s \leq t, 2\leq p\leq \infty}
  \Big(  &\big(|w(\cdot, s)|_{{}_{L^p}}+|w_x(\cdot, s)|_{{}_{L^p}}\big)(1+s)^{\frac{1}{2}(1-\frac1p)}\\
  &+|\delta (s)|+ |\dot\delta (s)|(1+s)^{\frac12} +\int_0^s(1+\tau)^{-1/2}|w(y,\tau)|d\tau\Big).
 \label{zeta2}
\ea

Then, we have the following key estimate, from which nonlinear stability readily follows.

\bl\label{zetalem}
For all $t\geq  0$ for which a solution $w$ exists with
$\zeta(t)$ uniformly bounded by some fixed, sufficiently small constant,
there holds
\be\label{zetaclaim}
 \zeta(t) \leq C(|w_0|_{{}_{L^1 \cap H^4}} + \zeta(t)^2).
 \ee
\el
\begin{proof}
It suffices to show, for $0\leq s\leq t$
$$
\begin{aligned}
&\big(|w(\cdot, s)|_{{}_{L^p}}+|w_x(\cdot, s)|_{{}_{L^p}}\big)(1+s)^{\frac{1}{2}(1-\frac1p)}
  +|\delta (s)|+ |\dot\delta (s)|(1+s)^{\frac12} +\int_0^s(1+\tau)^{-1/2}|w(y,\tau)|d\tau\\
  \leq&\, C(|w_0|_{{}_{L^1 \cap H^4}} + \zeta(t)^2)
.
\end{aligned}
$$

({\it $|\delta|$ bound.}) 
By \eqref{delta}, we have that $\delta(s)$ may
be split into 
\ba 
\label{delta12}
  \delta (s)&=-\int^\infty_{-\infty}e(y,s) w_0(y)\,dy  +\int^s_0\int^{+\infty}_{-\infty} e_{y}(y,s-\tau)(Q(w,w_x)+
  \dot \delta\, w)(y,\tau) dy d\tau\\
  &=:\delta_1 (s)+\delta_2 (s).
\ea 
By  \eqref{ebounds}[i], we readily see $|\delta_1(s)|\leq |w_0|_{L^1}$. To bound $|\delta_2(s)|$, splitting the integral in $y$ as $\int_{-\infty}^0+\int_0^{+\infty}$ and by \eqref{ebounds}[iv], the integral on $y<0$ can be bounded by
\ba 
\label{delta2bound}
&\int^s_0\int^0_{-\infty} \left|e_{y}(y,s-\tau)(Q(w,w_x)+\dot \delta w)(y,\tau)\right|\,dy\,d\tau\\
\leq&\int^s_0\int^0_{-\infty} C
(s-\tau)^{-1/2}\sum_{a_i^->0} e^{-|y+a_i^-(s-\tau)|^2/(M(s-\tau))}\left|(Q(w,w_x)+\dot \delta w)(y,\tau)\right|\,dy\,d\tau\\&+\int^s_0\int^0_{-\infty} C e^{-\eta|y|}\sum_{a_i^->0}
  \left({\rm errfn}\bigg(\frac{y+a_i^{-}(s-\tau)}{\sqrt{4\beta_i^{-}(s-\tau)}}\bigg)
  -{\rm errfn}\bigg(\frac{y-a_i^{-}(s-\tau)}{\sqrt{4\beta_i^{-}(s-\tau)}}\bigg)\right)\\&\qquad\qquad\;\;\,\times\left|(Q(w,w_x)+\dot \delta w)(y,\tau)\right|\,dy\,d\tau.
\ea 
Applying the Cauchy-Schwarz inequality, the first integral can be bounded by $$C\int_0^s (s-\tau)^{-1/4}(1+\tau)^{-3/4}\zeta(t)^2d\tau\leq C\zeta(t)^2.$$
Switching the order of integration, the second integral can be bounded by
\ba
\label{delta2bound2}
&\int^s_0\int^0_{-\infty} C e^{-\eta|y|}
  \left|(Q(w,w_x)+\dot \delta w)(y,\tau)\right|\,dy\,d\tau\\
=&\int^0_{-\infty} \int_0^s C e^{-\eta|y|}
  \left|(Q(w,w_x)+\dot \delta w)(y,\tau)\right|\,d\tau\,dy\\
  \leq &
  \int^0_{-\infty} \int_0^s C e^{-\eta|y|}
  (1+\tau)^{-1/2}\zeta(t)|w(y,\tau)|\,d\tau\,dy\leq \int^0_{-\infty} C e^{-\eta|y|}
 \zeta(t)^2\,dy\leq C \zeta(t)^2
\ea
where we have used $\int_0^s(1+\tau)^{-1/2}|w(y,\tau)|d\tau<\zeta(t)$.

({\it $|\dot \delta|$ bound.}) 
By \eqref{deltadot}, we have that $\dot \delta(s)$ may
be split into 
$$
\begin{aligned}
\dot \delta (s)&=-\int^\infty_{-\infty}e_t(y,s) w_0(y)\,dy+\int^s_0\int^{+\infty}_{-\infty} e_{yt}(y,s-\tau)(Q(w,w_x)+\dot \delta w)(y,\tau)\,dy\,d\tau\\
&=:\dot\delta_{1} (s)+\dot\delta_{2}(s).
\end{aligned}
$$
By \eqref{ebounds}[iii], we readily see $(1+s)^{1/2}|\dot\delta_1(s)|\leq C|w_0|_{L^1\cap L^\infty}$. To bound $|\dot\delta_2(s)|$, splitting the integral in $y$ as $\int_{-\infty}^0+\int_0^{+\infty}$ and by \eqref{ebounds}[vi], the integral on $y<0$ can be bounded by
$$
\begin{aligned}
&\int^s_0\int^0_{-\infty} \left|e_{yt}(y,s-\tau)(Q(w,w_x)+\dot \delta w)(y,\tau)\right|\,dy\,d\tau\\
\leq&\int^s_0\int^0_{-\infty} C
(s-\tau)^{-1}\sum_{a_i^->0} e^{-|y+a_i^-(s-\tau)|^2/(M(s-\tau))}\left|(Q(w,w_x)+\dot \delta w)(y,\tau)\right|\,dy\,d\tau\\&+\int^s_0\int^0_{-\infty} C (s-\tau)^{-1/2}e^{-\eta|y|} \sum_{a_i^->0} e^{-|y+a_i^-(s-\tau)|^2/(M(s-\tau))}\left|(Q(w,w_x)+\dot \delta w)(y,\tau)\right|\,dy\,d\tau.
\end{aligned}
$$
Applying the Cauchy-Schwarz inequality, the first integral can be bounded by $$C\int_0^s (s-\tau)^{-3/4}(1+\tau)^{-3/4}\zeta(t)^2d\tau\leq C(1+s)^{-1/2}\zeta(t)^2.$$
To bound the second integral, we have
\ba
&\int^s_0\int^0_{-\infty} (s-\tau)^{-1/2}e^{-\eta|y|} e^{-|y+a_i^-(s-\tau)|^2/(M(s-\tau))}\left|(Q(w,w_x)+\dot \delta w)(y,\tau)\right|\,dy\,d\tau\\
=&\int^s_0\int^{-a_i^-(s-\tau)/2}_{-\infty} (s-\tau)^{-1/2}e^{-\eta|y|} e^{-|y+a_i^-(s-\tau)|^2/(M(s-\tau))}\left|(Q(w,w_x)+\dot \delta w)(y,\tau)\right|\,dy\,d\tau\\
&+\int^s_0\int_{-a_i^-(s-\tau)/2}^0 (s-\tau)^{-1/2}e^{-\eta|y|} e^{-|y+a_i^-(s-\tau)|^2/(M(s-\tau))}\left|(Q(w,w_x)+\dot \delta w)(y,\tau)\right|\,dy\,d\tau\\
\leq&\int^s_0e^{-\eta a_i^-(s-\tau)/2}\int^{-a_i^-(s-\tau)/2}_{-\infty} (s-\tau)^{-1/2} e^{-|y+a_i^-(s-\tau)|^2/(M(s-\tau))}\left|(Q(w,w_x)+\dot \delta w)(y,\tau)\right|\,dy\,d\tau\\
&+\int^s_0\int_{-a_i^-(s-\tau)/2}^0 (s-\tau)^{-1/2}e^{-\eta|y|} e^{-(a_i^-)^2(s-\tau)/(4M)}\left|(Q(w,w_x)+\dot \delta w)(y,\tau)\right|\,dy\,d\tau\\
\leq&\int^s_0Ce^{-\eta a_i^-(s-\tau)/2}\left|(Q(w,w_x)+\dot \delta w)(\cdot,\tau)\right|_{L^\infty}\,d\tau\\
&+\int^s_0C(s-\tau)^{-1/2} e^{-(a_i^-)^2(s-\tau)/(4M)}\left|(Q(w,w_x)+\dot \delta w)(\cdot,\tau)\right|_{L^\infty}\,d\tau\\
\leq&\int^s_0C e^{-\bar\eta(s-\tau)}(1+\tau)^{-1}\zeta(t)^2\,d\tau+\int^s_0C e^{-\bar\eta(s-\tau)}(s-\tau)^{-1/2}(1+\tau)^{-1}\zeta(t)^2\,d\tau\\
\leq&\, C(1+s)^{-1}\zeta(t)^2+\int^s_0C (s-\tau)^{-3/4}(1+\tau)^{-1}\zeta(t)^2\,d\tau\leq C(1+s)^{-1/2}\zeta(t)^2.
\ea
The integral on $y>0$ can be similarly estimated. Hence $(1+s)^{1/2}|\dot\delta(s)|\leq C(|w_0|_{L^1\cap H^s}+\zeta(t)^2)$. 

({\it $|w|_{L^p}$ bound.})
To shorten the writing, we will use $\theta:=\sum_j \theta_j$ as in \eqref{theta} to denote a sum of
Gaussian kernels $\theta_j$ moving with nonzero speeds $a_j$.

The equation \eqref{veq} of $w$ may be split into
\ba 
\label{wsplitting}
 w(x,s)&=\int^\infty_{-\infty} \tilde G(x,s;y)w_0(y)\,dy-\int^s_0\int^\infty_{-\infty}\tilde G_y(x,s-\tau;y)(Q(w,w_x)+
  \dot \delta w)(y,\tau) dy \, d\tau\\
  &\;\;\;\,+\int^\infty_{-\infty} H(x,s;y)w_0(y)\,dy-\int^s_0\int^\infty_{-\infty} H(x,s-\tau;y)(Q_y(w,w_x)+
  \dot \delta w_y)(y,\tau) dy \, d\tau\\
  &=:w_1(x,s)+w_2(x,s)+w_3(x,s)+w_4(x,s).
\ea 
The $L^p$-norm of the $w_1$ term can be bounded by
\be 
\label{u1bound}
|w_1(\cdot,s)|_{L^p}\leq \int^\infty_{-\infty}  |\tilde G(\cdot,s;y)|_{L^p}|w_0(y)|\,dy\leq C (1+s)^{-\frac{1}{2}(1-\frac{1}{p})}|w_0|_{L^1}.
\ee
The $L^p$-norm of the $w_2$ term can be split and bounded by
\ba 
\label{Lpu2}
|w_2(\cdot,s)|_{L^p}\leq & \int^s_0C(s-\tau)^{-\frac{1}{2}(\frac{3}{2}-\frac{1}{p})}|(Q(w,w_x)+
  \dot \delta w)(\cdot,\tau)|_{L^2} d\tau\\
  &+\left|\int^s_0\int^\infty_{-\infty} Ce^{-\eta|y|}\theta(x,s-\tau;y)(Q(w,w_x)+
  \dot \delta w)(y,\tau) dy\, d\tau\right|_{L^p}
\ea 
where the first term can be bounded by  $\int^s_0C(s-\tau)^{-\frac{1}{2}(\frac{3}{2}-\frac{1}{p})}(1+\tau)^{-\frac{3}{4}}\zeta(t)^2 d\tau\leq C (1+s)^{-\frac{1}{2}(1-\frac{1}{p})}\zeta(t)^2$ and the second term can be bounded by the sum of
$$
\begin{aligned}
&C\left|\int_{s/2}^s\int^\infty_{-\infty} e^{-\eta|y|}\theta(x,s-\tau;y)(Q(w,w_x)+
  \dot \delta w)(y,\tau) dy\, d\tau\right|_{L^p}\\
 \leq&\,C\int_{s/2}^s\left|\int^\infty_{-\infty} e^{-\eta|y|}\theta(x,s-\tau;y)(Q(w,w_x)+
  \dot \delta w)(y,\tau) dy\right|_{L^p}\, d\tau\\
  \leq&\,C\int_{s/2}^s |\theta(\cdot,s-\tau)|_{L^p}|e^{-\eta|\cdot|}|_{L^1}|(Q(w,w_x)+
  \dot \delta w)(\cdot,\tau)|_{L^\infty} \, d\tau\\
  \leq&\,C\int_{s/2}^sC(s-\tau)^{-\frac{1}{2}(1-\frac{1}{p})}(1+\tau)^{-1}\zeta(t)^2d\tau\leq C(1+s/2)^{-1}\zeta(t)^2\int_{s/2}^s(s-\tau)^{-\frac{1}{2}(1-\frac{1}{p})}d\tau\\
  \leq&\, C(1+s)^{-\frac{1}{2}(1-\frac{1}{p})}\zeta(t)^2
\end{aligned}
$$
and
$$
\begin{aligned}
&\,C\left|\int_0^{s/2}\int^\infty_{-\infty} e^{-\eta|y|}\theta(x,s-\tau;y)(Q(w,w_x)+
  \dot \delta w)(y,\tau) dy\, d\tau\right|_{L^p}\\
 =&\,C
	\left|\int^\infty_{-\infty}e^{-\eta|y|}\int_0^{s/2} \theta(x,s-\tau;y)(Q(w,w_x)+
  \dot \delta w)(y,\tau) d\tau\,dy\right|_{L^p}\\
  \leq &\,C
	\int^\infty_{-\infty}e^{-\eta|y|}\int_0^{s/2} (s-\tau)^{-\frac{1}{2}(1-\frac{1}{p})}\big(|w(\cdot,\tau)|_{L^\infty}+|w_x(\cdot,\tau)|_{L^\infty}+|\dot\delta|(\tau)\big) |w(y,\tau)| d\tau\,dy\\
  \leq&\,C
	s^{-\frac{1}{2}(1-\frac{1}{p})}\zeta(t)\int^\infty_{-\infty}e^{-\eta|y|}\int_0^{s/2} (1+\tau)^{-1/2}|w(y,\tau)| d\tau\,dy\\
   \leq &\,C(s)^{-\frac{1}{2}(1-\frac{1}{p})}\zeta(t)^2.
\end{aligned}
$$
{\it Note that we have used here in a crucial way
the vertical estimate $\int_0^{s/2} (1+\tau)^{-1/2}|w(y,\tau)| d\tau\leq C\zeta(s)$
to deal with outgoing characteristic modes $\theta$. (In the case, as in \cite{YZ}, that all
modes $\theta$ move inward toward the shock, the contribution of the second term is time-exponentially
small, and may be neglected.)}
In the case $s\leq 1$, the second term in \eqref{Lpu2} is majorized by (a multiple of) the first term in 
\eqref{Lpu2}, and so may be estimated in the same way to obtain a bound of $C\zeta(t)^2$; thus, we may replace
the last bound, $C(s)^{-\frac{1}{2}(1-\frac{1}{p})}\zeta(t)^2$, by
$C(1+s)^{-\frac{1}{2}(1-\frac{1}{p})}\zeta(t)^2$, as required, removing the apparent singularity as
$s\to 0$.
The $L^p$-norm of the $w_3$ term can be bounded by 
\be 
\label{u3bound}
\left|\int^\infty_{-\infty} H(\cdot,s;y)w_0(y)\,dy\right|_{L^p}\leq e^{-\eta s}|w_0|_{L^p}.
\ee 
The $L^p$-norm of the $w_4$ term can be bounded by 
\ba 
\label{u4bound}
&\left|\int_0^s\int^\infty_{-\infty} H(x,s-\tau;y)(Q_y(w,w_x)+\dot\delta w_y)(y,\tau)\,dy\,d\tau\right|_{L^p}\\
\leq &\,C\int_0^s e^{-\eta(s-\tau)}|Q_y(w,w_x)+\dot\delta w_y)(\cdot,\tau)|_{L^p}\,d\tau\\
\leq&\,C\int_0^s e^{-\eta(s-\tau)}\left((1+\tau)^{-1/2}\zeta(t)|w_y|_{L^p}+|w_{yy}|_{L^\infty}|w|_{L^p}\right)\,d\tau
\ea 
where, by definition \eqref{zeta2}, 
\ba 
\label{u4bound1}
\int_0^s e^{-\eta(s-\tau)}(1+\tau)^{-1/2}\zeta(t)|w_y|_{L^p}\,d\tau\leq&\int_0^s e^{-\eta(s-\tau)}(1+\tau)^{-\frac{1}{2}(2-\frac{1}{p})}\zeta(t)^2\,d\tau\\
\leq&(1+s)^{-\frac{1}{2}(2-\frac{1}{p})}\zeta(t)^2
\ea 
and by the estimate \eqref{ebounds4}, 
\ba 
\label{u4bound2}
&\int_0^s e^{-\eta(s-\tau)}|w_{yy}|_{L^\infty}|w|_{L^p}\,d\tau\\
\leq& \,C \int_0^s e^{-\eta(s-\tau)}\left(|w_0|_{H^3}+\zeta(t)\right)(1+\tau)^{-\frac{1}{4}}(1+\tau)^{-\frac{1}{2}(1-\frac{1}{p})}\zeta(t)\,d\tau\\
\leq& \,C|w_0|_{H^3}\zeta(t)(1+s)^{-\frac{1}{2}(\frac{3}{2}-\frac{1}{p})}+C\zeta(t)^2(1+s)^{-\frac{1}{2}(\frac{3}{2}-\frac{1}{p})}\\
\leq& \,C|w_0|_{H^3}^2(1+s)^{-\frac{1}{2}(\frac{3}{2}-\frac{1}{p})}+C\zeta(t)^2(1+s)^{-\frac{1}{2}(\frac{3}{2}-\frac{1}{p})}\\
\leq& \,C|w_0|_{H^3}(1+s)^{-\frac{1}{2}(\frac{3}{2}-\frac{1}{p})}+C\zeta(t)^2(1+s)^{-\frac{1}{2}(\frac{3}{2}-\frac{1}{p})}.
\ea 
({\it $|w_x|_{L^p}$ bound.})
By \eqref{veq}, we have that $w_x$ may be split into
$$
\begin{aligned}
 w_x(x,s)&=\int^\infty_{-\infty} \tilde G_x(x,s;y)w_0(y)\,dy-\int^s_0\int^\infty_{-\infty}\tilde G_{xy}(x,s-\tau;y)(Q(w,w_x)+
  \dot \delta w)(y,\tau) dy \, d\tau\\
  &+\partial_x\int^\infty_{-\infty} H(x,s;y)w_0(y)\,dy-\int^s_0\partial_x\int^\infty_{-\infty} H(x,s-\tau;y)(Q_y(w,w_x)+
  \dot \delta w_y)(y,\tau) dy \, d\tau\\
  &=:w_{1x}(x,s)+w_{2x}(x,s)+w_{3x}(x,s)+w_{4x}(x,s).
  \end{aligned}
$$
Following a similar estimate as \eqref{u1bound}, the $L^p$-norm of $w_{1x}$ can be bounded by 
$$
C(1+s)^{-\frac{1}{2}(1-\frac{1}{p})}|w_0|_{L^1\cap L^p}.
$$
To bound the $L^p$-norm of $w_{2x}$, we split the time integral as $\int_0^s=\int_0^{s-1}+\int_{s-1}^s$. 
And, note that for $s-\tau\geq  1$, $\tilde{G}_{xy}$ satisfies bounds at least as good as those for $\tilde{G}_y$. 
Hence,
$$
\left|\int^{s-1}_0\int^\infty_{-\infty}\tilde G_{xy}(x,s-\tau;y)(Q(w,w_x)+
  \dot \delta w)(y,\tau) dy \, d\tau\right|_{L^p}\leq C(1+s)^{-\frac{1}{2}(1-\frac{1}{p})}\zeta(t)^2.
$$  
The remaining integral on $\int_{s-1}^s$, by Young's convolution inequality and triangular inequality, can be estimated as
$$
\begin{aligned}
&\left|\int_{s-1}^s\int^\infty_{-\infty}\tilde G_{xy}(x,s-\tau;y)(Q(w,w_x)+
  \dot \delta w)(y,\tau) dy \, d\tau\right|_{L^p}\\
  =&\left|\int_{s-1}^s\int^\infty_{-\infty}\tilde G_{x}(x,s-\tau;y)(Q(w,w_x)+
  \dot \delta w)_y(y,\tau) dy \, d\tau\right|_{L^p}\\
  \leq& \int_{s-1}^s\big|(s-\tau)^{-1/2}\theta(\cdot,s-\tau)\big|_{L^1}|(Q(w,w_x)+
  \dot \delta w)_y(\cdot,\tau)|_{L^p}d\tau
  \\&+\Big|e^{-\eta|\cdot|}\int_{s-1}^s\int_{-\infty}^\infty \theta(\cdot-y,s-\tau)|(Q(w,w_x)+
  \dot \delta w)_y(y,\tau)|dyd\tau\Big|_{L^p}\\
  \leq& \,C\int_{s-1}^s (s-\tau)^{-1/2}(1+\tau)^{-\frac{1}{2}(\frac{3}{2}-\frac{1}{p})}\big(|w_0|_{H^3}+\zeta(t)^2\big)d\tau+\int_{s-1}^s|(Q(w,w_x)+
  \dot \delta w)_y|_{L^\infty}d\tau\\
  \leq& \,C(1+s)^{-\frac{1}{2}(\frac{3}{2}-\frac{1}{p})}\big(|w_0|_{H^3}+\zeta(t)^2\big)
  \end{aligned}
$$
where we have used the Sobolev bound $|w|_{W^{2,\infty}}\leq C |w|_{H^3}$ and estimate \eqref{ebounds4}.

The $L^p$ norm of $w_{3x}$ can be bounded by $e^{-\eta s}|w_0|_{W^{1,p}}$ and hence by $e^{-\eta s}|w_0|_{H^s}$, $s\geq  2$. The $L^p$ norm of $w_{4x}$ can be bounded by
$$
\begin{aligned}
|w_{4x}(\cdot,s)|_{L^p}\leq& \int_0^s Ce^{-\eta(s-\tau)} \left(|\dot\delta w_{xx}|_{L^p}+|w_x w_x|_{L^p}+|w w_{xx}|_{L^p}+|w_x w_{xx}|_{L^p}+|w w_{xxx}|_{L^p}\right)d\tau\\
\leq &\int_0^s Ce^{-\eta(s-\tau)} \left((1+\tau)^{-1/2}\zeta(t)|w|_{H^4}+(1+\tau)^{-\frac{1}{2}(2-\frac{1}{p})}\zeta(t)^2\right)d\tau\\
\leq &\int_0^s Ce^{-\eta(s-\tau)} \left((1+\tau)^{-1/2}\zeta(t)|w|_{H^4}+(1+\tau)^{-\frac{1}{2}(2-\frac{1}{p})}\zeta(t)^2\right)d\tau\\
\leq &\int_0^s Ce^{-\eta(s-\tau)} \left((1+\tau)^{-3/4}\zeta(t)\big(|w_0|_{H^4}+\zeta(t)\big)+(1+\tau)^{-\frac{1}{2}(2-\frac{1}{p})}\zeta(t)^2\right)d\tau\\
\leq &\, (1+s)^{-\frac{1}{2}(1-\frac{1}{p})}C(|w_0|_{H^4}+\zeta(t)^2).
\end{aligned}
$$
({\it Vertical estimate.}) To control integral $\int_0^s (1+\tau)^{-1/2}|w(y,\tau)|d \tau$, by the decomposition of $w$ \eqref{veq}, we 
use the auxiliary estimates \eqref{aux1}-\eqref{aux3} of Proposition \ref{auxprop}.
Applying these estimates together with the bounds
\ba \label{sourcebounds}
\Big|Q(w,w_x)+\dot\delta w\Big|_{L^2} &\leq C (1+s)^{-1/2} \zeta(t)|w|_{L^2}\leq C(1+s)^{-3/4}\zeta(t)^2,\\ |Q(w,w_x)_y+\dot\delta(t)w_y|_{L^\infty}&\leq C\left(|w w_x|_{L^\infty}+|w_x|_{L^\infty}^2+|ww_{xx}|_{L^\infty}\right)\\
&\leq C(\zeta(t)^2(1+s)^{-\frac{3}{4}}+\zeta(t)(1+s)^{-\frac{1}{2}}|w_0|_{H^3}e^{-\theta s})
\ea
we obtain the vertical estimate
$$\int_0^s(1+\tau)^{-1/2}|w(y,\tau)|d\tau\leq C(|w_{0}|_{L^1\cap H^s}+\zeta^2(t)).
$$
\end{proof}

\begin{proof}[Proof of Theorem \ref{main1}]. 
By the assumption on $dF^0>0$ in {(A1)}, it is equivalent to prove \eqref{mainests} with $w$ in the place of $\tilde{U}(\cdot+\delta(t),t)-\bar{U}(\cdot)$. From Lemma \ref{zetalem}, it follows by continuous induction that, provided 
$|w_0|_{{}_{L^1\cap H^4}} < 1/(4C^2)$, there holds 
\begin{equation}
 \zeta(t) \leq 2C |w_0|_{{}_{L^1\cap H^4}}
 \label{bd}
\end{equation}
for all $t\geq 0$ such that $\zeta$ remains small.
For, by standard short-time existence theory \cite{K}
there exists a solution $w(\cdot,t)\in H^4$
on the open time-interval for which $|w|_{H^4}$ remains bounded and sufficiently small,
and thus $\zeta$ is well-defined and continuous. 
Now, let $[0,T)$ be the maximal interval on which $|w|_{{}_{H^4}}$
remains strictly bounded by some fixed sufficiently small constant $c>0$.

Combined with \eqref{zeta2}, the damping estimate \eqref{ebounds2} yields
\ba 
|w(t)|_{H^s}^2\leq& \,C |w_0|^2_{H^s}e^{-\nu t}
+C\zeta(t)^2\int_0^t e^{-\nu (t-\tau )}(1+\tau)^{-1/2}\, d\tau\\
\leq&\, C |w_0|^2_{H^s}e^{-\nu t}
+C\zeta(t)^2(1+t)^{-1/2},
\label{ebounds3}
\ea
giving
\be 
\label{ebounds4}
|w(t)|_{H^s}\leq C |w_0|_{H^s}e^{-\nu t}+C\zeta(t)(1+t)^{-1/4}\leq C( |w_0|_{H^s}+\zeta(t))(1+t)^{-1/4},
\ee 
for all $0\leq t\leq T$. Hence, taking $s=4$, we obtain
\be
|w(t)|_{H^4}\leq C |w_0|_{H^4}e^{-\nu t}+C\zeta(t)(1+t)^{-1/4} \leq C|w_0|_{L^1\cap H^4}(1+t)^{-1/4},
 \label{2calc}
\ee
and so the solution continues so long as $|w_0|_{L^1\cap H^4}$ is taken sufficiently small.
	
This yields global existence and the claimed 
bound on $|w|_{H^s}$. Meanwhile, the bound \eqref{bd} at once yields the claimed bounds on $|w|_{L^p}$, $|w_x|_{L^p}$, $|\dot\delta|$, and $|\delta|$.
\end{proof}

\end{document}